\newtheorem{theorem}{Theorem}[section]
\newtheorem{corollary}{Corollary}
\newtheorem*{theorem*}{Theorem}
\newtheorem*{remark*}{Remark}
\newtheorem*{problem*}{Problem}
\newtheorem*{conjecture*}{Conjecture}
\newtheorem*{question*}{Question}
\newtheorem{lemma}[theorem]{Lemma}
\newcommand{\rom}[1]{\uppercase\expandafter{\romannumeral #1\relax}}
\begin{document}

\title[A uniqueness property of general Dirichlet series]{A uniqueness property of general Dirichlet series}

\author[Anup B. Dixit]{Anup B. Dixit}
\address{Department of Mathematics and Statistics\\ Queen's University\\ Jefferey Hall, 48 University Ave\\ Kingston\\ Canada, ON\\ K7L 3N6}
\email{anup.dixit@queensu.ca}

\subjclass[2010]{11M41}

\keywords{general Dirichlet series, Selberg class, Lindel\"of class, degree of $L$-functions}

\maketitle
\date{}

\begin{abstract}
Let $F(s)=\sum_n a_n/\lambda_n^s$ be a general Dirichlet series which is absolutely convergent on $\Re(s)>1$. Assume that $F(s)$ has an analytic continuation and satisfies a growth condition, which gives rise to certain invariants namely the degree $d_F$ and conductor $\alpha_F$. In this paper, we show that there are at most $2d_F$ general Dirichlet series with a given degree $d_F$, conductor $\alpha_F$ and residue $\rho_F$ at $s=1$. As a corollary, we get that elements in the extended Selberg class with positive Dirichlet coefficients are determined by their degree, conductor and the residue at $s=1$.
\end{abstract}

\section{\bf Introduction}
The study of $L$-functions plays a central role in number theory. These are functions attached to arithmetic and geometric objects. Such functions are typically defined as a Dirichlet series of the form
\begin{equation*}
    F(s) = \sum_{n=1}^{\infty} \frac{a_n}{n^s},
\end{equation*}
which is absolutely convergent on a half plane $\Re(s) > \sigma_0$, where the coefficients $a_n$ arise from the underlying object. Then we try to analytically continue $F(s)$ to the whole complex plane. If this is possible, the value distribution of $F(s)$ sheds light on many important arithmetic properties of the underlying structure.\\

The most common example of an $L$-function is the Riemann zeta-function, defined on $\Re(s)>1$ as
\begin{equation*}
    \zeta(s) = \sum_{n=1}^{\infty} \frac{1}{n^s}.
\end{equation*}
It has an analytic continuation to $\mathbb{C}$ except for a simple pole at $s=1$ with residue $1$. Furthermore, it satisfies a functional equation of the following form. If
\begin{equation*}
    \Psi(s) := \pi^{-s/2}\, \Gamma\left(\frac{s}{2}\right)\, \zeta(s),
\end{equation*}
then 
\begin{equation*}
    \Psi(s) = \Psi(1-s).
\end{equation*}
Since $\Gamma(s)$ has poles on all non-positive integers, we get that
\begin{equation}\label{trivial-zeros}
    \zeta(-2n) = 0
\end{equation}
for all $n\in \mathbb{N}$. These are called the trivial zeros of $\zeta(s)$. Moreover, using properties of the gamma function, we get
\begin{equation}\label{growthzeta}
    \max_{|s|=r} |\zeta(s)| \ll \frac{\Gamma(r)}{(2\pi)^r},
\end{equation}
for $r\geq 3/2$. However this growth condition fails if $2\pi$ in \eqref{growthzeta} is replaced by $2\pi + \epsilon$ for any $\epsilon>0$, i.e., for any constant $\eta >0$ there exists an infinite sequence $\{r_m\}$ of positive real numbers going to infinity such that
\begin{equation}\label{notgrowthzeta}
    \max_{|s|=r_m} |\zeta(s)| \geq  \frac{\eta\, \Gamma(r_m)}{(2\pi + \epsilon)^{r_m}}.
\end{equation}

This gives rise to a converse question, which was answered affirmatively by A. Beurling \cite{beurling} in 1950. He proved that
\begin{theorem}[Beurling]\label{beurling-theorem}
Consider a function $F(s)$ satisfying the following properties.
\begin{enumerate}
    \item[(a)] For $0<\lambda_1 < \lambda_2 <\cdots$, let 
\begin{equation*}
    F(s) = \sum_{n=1}^{\infty} \frac{1}{\lambda_n^s}
\end{equation*} 
be absolutely convergent for $\Re(s)>1$.

    \item[(b)] $F(s)$ has an analytic continuation to $\mathbb{C}$ except for a simple pole at $s=1$ with residue $1$.
    
    \item[(c)] $F(-2n)=0$ for all $n\in\mathbb{N}$.
    
    \item[(d)]  $F(s)$ satisfies the growth condition \eqref{growthzeta} and \eqref{notgrowthzeta}.
\end{enumerate}
Then
\begin{equation*}
    F(s) = \zeta(s) \hspace{4mm} \text{or}\hspace{4mm} F(s) = (2^s-1)\, \zeta(s).
\end{equation*}
\end{theorem}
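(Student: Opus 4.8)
The plan is to reformulate the statement as saying that $F(s)/\zeta(s)$, a priori only a general Dirichlet series on $\Re(s)>1$, extends to the constant $1$ or to $2^s-1$, and to prove this by passing to the theta transform $\Theta_F(x):=\sum_n e^{-\lambda_n x}$ and invoking a rigidity property of such transforms. Hypothesis (a) gives $\#\{n:\lambda_n\le X\}\ll X^{1+\varepsilon}$, so $\Theta_F$ is holomorphic on $\Re(x)>0$ and satisfies $\Gamma(s)F(s)=\int_0^\infty\Theta_F(x)x^{s-1}\,dx$ there. Using the polynomial bounds that (d) forces in vertical strips, one may shift the Mellin contour and read off: by (b), $\Theta_F(x)=\tfrac1x+\sum_{k\ge0}\tfrac{(-1)^k}{k!}F(-k)x^k$ for $x$ near $0$; by (c), i.e.\ \eqref{trivial-zeros} for $F$, the even Taylor coefficients with $k\ge2$ vanish, so $\Theta_F(x)-F(0)$ is an \emph{odd} holomorphic function of $x$ on $|x|<2\pi$; and the dichotomy \eqref{growthzeta}--\eqref{notgrowthzeta} says precisely that $\Theta_F$ continues holomorphically to $\{0<|x|<2\pi\}$ while having a singularity on $|x|=2\pi$.

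Carrying out the same translation for the two candidates, whose transforms are $\Theta_\zeta(x)=\tfrac1{e^x-1}$ (and $\tfrac1{e^x-1}+\tfrac12=\tfrac12\coth(x/2)$ is indeed odd, since $\zeta(0)=-\tfrac12$) and $\tfrac1{2\sinh(x/2)}$ respectively, one sees that the theorem reduces to the statement that $\tfrac1{e^x-1}$ and $\tfrac1{2\sinh(x/2)}$ are the only functions with the profile above. The crucial extra input is that $\Theta_F$ is the Laplace transform of the positive point measure $\sum_n\delta_{\lambda_n}$: it is completely monotone on $(0,\infty)$ and tends to $0$ as $x\to+\infty$, and the holomorphy of $\Theta_F$ across the segment $(-2\pi i,2\pi i)$ means that the distributional Fourier transform of $\sum_n\delta_{\lambda_n}$ is real-analytic on $(-2\pi,2\pi)$ apart from the atom at the origin dictated by the residue $1$ at $s=1$, with its first further singularity on $|y|=2\pi$.

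From this Poisson-type structure -- a crystalline-measure phenomenon -- I would extract, as the main and hardest step, that the frequencies $\lambda_n$ eventually form an arithmetic progression of common difference $1$. Granting this, write $\Theta_F(x)=P(x)+e^{-(\lambda-1)x}/(e^x-1)$ with $P$ a finite combination of exponentials $e^{-\lambda_n x}$ and $\lambda>0$ the initial frequency of the progression; then the reflection identity $\Theta_F(x)+\Theta_F(-x)=2F(0)$ on $|x|<2\pi$, combined with the boundedness of $\Theta_F$ as $x\to+\infty$, forces $P\equiv0$ and $\lambda\in\{\tfrac12,1\}$, so that $\Theta_F\in\{\tfrac1{e^x-1},\tfrac1{2\sinh(x/2)}\}$. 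Undoing the Mellin transform gives $F=\zeta$ or $F=(2^s-1)\zeta$, and conversely both of these lie in the admissible class and satisfy (a)--(d), the factor $2^s$ having modulus at most $2^r$ on $|s|=r$ and hence being negligible against the super-exponential size of $\zeta$ near the negative real axis, where the maximum on $|s|=r$ is attained. The essential obstacle is the step converting the two-sided growth bound \eqref{growthzeta}--\eqref{notgrowthzeta} into the eventual periodicity of $\{\lambda_n\}$.
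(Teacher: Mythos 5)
Your theta-transform setup is a legitimate reformulation, and the preliminary bookkeeping is essentially right: hypothesis (c) does make $\Theta_F(x)-F(0)$ odd on a punctured disc about the origin, and the growth bound \eqref{growthzeta} does control the Taylor coefficients $F(-k)/k!$ well enough to continue $\Theta_F$ to $0<|x|<2\pi$ (though the claim that \eqref{notgrowthzeta} forces a singularity exactly on $|x|=2\pi$ needs an argument, since that condition bounds $\max_{|s|=r_m}|F(s)|$ from below rather than the values $F(-k)$ themselves). The fatal problem is the step you yourself identify as ``the main and hardest step'': that the positivity of $\sum_n\delta_{\lambda_n}$ together with the holomorphy of its Laplace transform on $0<|x|<2\pi$ forces the $\lambda_n$ to be eventually an arithmetic progression of common difference $1$. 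You offer no mechanism for this beyond the phrase ``a crystalline-measure phenomenon''; but this assertion \emph{is} the theorem --- everything after ``Granting this'' is conditional on it --- so the proposal does not constitute a proof. Rigidity statements of this Poisson-summation type are exactly the delicate point, and nothing in (a)--(d) hands you one for free.

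For comparison, the argument the paper follows (Beurling's, and the one generalized in the proof of Theorem \ref{main-theorem}) sidesteps any direct structural claim about the frequencies. One forms the entire function of exponential type $f(z)=\prod_n\bigl(1+z^2/\lambda_n^2\bigr)$, uses the Mellin identity $\int_0^\infty \log f(x)\,x^{-1-s}\,dx=\frac{\pi}{s\sin(\pi s/2)}F(s)$, and shifts the contour using (b)--(d) to obtain the real-axis asymptotic $f(x)=b\,x^{a}e^{\pi x}+O(e^{-\delta x})$. The rigidity is then supplied by Lemma \ref{uniqueness-growth}: an even entire function of exponential type with $f(0)=1$ obeying such an asymptotic is annihilated by the explicit second-order ODE satisfied by $x^pe^{\pm\pi x}$ (via Carlson's theorem), hence equals $\sinh(\pi z)/(\pi z)$ or $\cosh(\pi z)$; the $\lambda_n$ are read off as its zeros, giving $\lambda_n=n$ or $\lambda_n=n-\tfrac12$ and hence $F=\zeta$ or $F=(2^s-1)\zeta$. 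If you wish to pursue your route, you must prove an analogue of that lemma for $\Theta_F$; as written, the eventual-periodicity claim is an unsupported leap.
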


The goal of this paper is to extend this result to any general Dirichlet series of the form 
\begin{equation*}
    F(s) = \sum_{n=1}^{\infty} \frac{a_n}{\lambda_n^s},
\end{equation*}
which is absolutely convergent on $\Re(s)>1$. In order to state the main theorem, we first introduce some growth parameters.

\subsection{\bf Growth Parameters}
Let $F(s)$ be a general Dirichlet series given by
\begin{equation*}
    F(s) = \sum_{n=1}^{\infty} \frac{a_n}{\lambda_n^s},
\end{equation*}
absolutely convergent on $\Re(s)>1$. Suppose $F(s)$ has an analytic continuation to $\mathbb{C}$, except for a simple pole at $s=1$ with residue $\rho_F$. We say that $F(s)$ satisfies the \textit{growth condition} if

\begin{enumerate}
    \item[$(\dagger)$] there exists a positive integer $d_F$ and a positive real number $\alpha_F$ such that
\begin{equation}\label{degree-conductor-growth-condition}
    \max_{s=r} |F(s)| \ll \frac{\Gamma(r)^{d_F}}{\alpha_F^r},
\end{equation}
for $r\geq 3/2$, and for any $\eta,\epsilon > 0$, there exists an infinite sequence $\{r_m\}$ of positive real numbers going to infinity such that
\begin{equation*}
    \max_{s=r_m} |F(s)| \geq \frac{\eta \, \Gamma(r_m)^{d_F}}{(\alpha_F+ \epsilon)^{r_m}}.
\end{equation*}
\end{enumerate}
If $F(s)$ satisfies $(\dagger)$, we call $d_F$ the degree of $F(s)$ and $\alpha_F$ the conductor of $F(s)$. These are closely related to the notion of degree and conductor arising from the functional equation of elements in the Selberg class.\\

In 1989, Selberg \cite{selberg} introduced a class of $L$-functions $\mathbb{S}$ which is expected to encapsulate all familiar $L$-functions arising from arithmetic and geometry. For instance, the Riemann zeta-function $\zeta(s)$, the Dirichlet $L$-functions $L(s,\chi)$, the Dedekind zeta-functions $\zeta_K(s)$ etc. are all members of the Selberg class $\mathbb{S}$. This class has been extensively studied over the past few decades. For precise definition of $\mathbb{S}$ and recent developments, the reader may refer to the excellent survey articles \cite{kaczorowski-survey}, \cite{Rm3}, \cite{Perelli2} and \cite{Perelli1}. For $F\in\mathbb{S}$, there exist real numbers $Q>0$, $\alpha_i\geq 0$, complex numbers $\beta_i$ for $0\leq i \leq k$ and $w\in\mathbb{C}$, with $\Re(\beta_i) \geq 0$ and $|w| =1$, such that
    \begin{equation}\label{fneq}
        \Phi(s) := Q^s \prod_i \Gamma(\alpha_i s + \beta_i) F(s)
    \end{equation}
    satisfies the functional equation
    \begin{equation*}
        \Phi(s) = w \overline{\Phi}(1-\overline{s}).
    \end{equation*}

Although the functional equation is not unique, because of the duplication formula of the $\Gamma$-function, we have some well-defined invariants, namely the degree of $F(s)$ denoted $d_F$, defined as (see \cite{Conrey})
\begin{equation}\label{degree-functional-eqn}
    d_F = 2\sum_{i=1}^m \alpha_i,
\end{equation}
and the conductor of $F(s)$ denoted $q_F$ is defined as (see \cite{Per2})
\begin{equation}\label{conductor-functional-eqn}
    q_F := (2\pi)^{d_F} Q^2 \prod_{i=1}^m \alpha_i^{2\alpha_i}.
\end{equation}

It is an intriguing conjecture that for $F\in\mathbb{S}$, $d_F$ and $q_F$ are always positive integers. It is easily seen that if $F(s)$ satisfies a functional equation of the type \eqref{fneq}, then $F(s)$ also satisfies the growth condition $(\dagger)$. Moreover, the notions of degree $d_F$ in both cases coincide and
\begin{equation*}
    \alpha_F = \frac{(2\pi)^{d_F}}{q_F}.
\end{equation*}

It is worth emphasizing here that the growth condition $(\dagger)$ is a far less restrictive a condition than the functional equation. This is evident from \cite{Km}, where V. K. Murty introduced a class of $L$-functions $\mathbb{M}$ based on growth conditions, which contains the Selberg class $\mathbb{S}$. He proved that $\mathbb{M}$ is closed under addition and has a ring structure, which is not the case for $\mathbb{S}$. A more extensive study of this class is undertaken in \cite{anup-thesis}.

\subsection{\bf The class $\mathbb{B}$}
Let $\mathbb{B}$ denote the class of meromorphic functions $F(s)$ satisfying the following properties.
\begin{enumerate}
    \item {\bf General Dirichlet series} - It can be expressed as a general Dirichlet series
    \begin{equation*}
        F(s) = \sum_{n=1}^{\infty} \frac{a_n}{\lambda_n^s},
    \end{equation*}
    which is absolutely convergent on $\Re(s)>1$, where $0<\lambda_1 < \lambda_2 \cdots $ and $a_n > 0$. We also normalize the leading coefficient, $a_1=1$.
    \item {\bf Analytic continuation} - It has an analytic continuation to $\mathbb{C}$ except for a \textit{simple} pole at $s=1$ with residue $\rho_F$.
    \item {\bf Growth condition} - It satisfies the growth condition $(\dagger)$ with associated invariants $d_F$ and $\alpha_F$.
    \item {\bf Trivial zeros} - $F(-2n d_F)=0$ for all $n\in\mathbb{N}$.
\end{enumerate}
    

In this paper, we consider the question of how many $F\in \mathbb{B}$ can have the same values of degree $d_F$, conductor $\alpha_F$ and residue $\rho_F$ at $s=1$?\\ 

This question is motivated by the classification problem in the Selberg class $\mathbb{S}$. The degree conjecture in $\mathbb{S}$ asserts that for any $F\in\mathbb{S}$, the degree $d_F$ is a non-negative integer. Towards this conjecture, it was shown by Conrey and Ghosh \cite{Conrey} that there are no $F\in \mathbb{S}$ such that $0<d_F<1$. Later Perelli and Kaczorowski \cite{Perelli3} showed that there are no $F\in\mathbb{S}$ such that $1<d_F < 2$. A more intricate question is to classify all elements in $\mathbb{S}$ with a given degree. In this direction, Perelli and Kaczorowski \cite{Perelli4} showed that in the Selberg class if $d_F=1$, then $F(s) = \zeta(s)$ or $F(s) = L(s + i\theta, \chi)$ where $\chi$ is a non-principal irreducible Dirichlet character modulo $q$ and $\theta\in\mathbb{R}$. However, no such classification is known for elements in $\mathbb{S}$ with degree $ 2$ or higher.\\

In this paper, we restrict ourselves to class $\mathbb{B}$ where the functions have positive generalized Dirichlet coefficients and a simple pole at $s=1$. Note that in $\mathbb{B}$, we no longer have the restriction of Euler product or functional equation. Instead we enforce a weaker condition on the growth of the function. Surprisingly, we show that the degree $d_F$, conductor $\alpha_F$ and the residue $\rho_F$, with a certain additional condition determines the function $F(s)$. The proof is inspired by the work of A. Beurling \cite{beurling}.

\subsection{Main Theorem}
\begin{theorem}\label{main-theorem}
Suppose $F\in \mathbb{B}$ satisfies
\begin{equation}\label{main-condition}
    \alpha_F > \left(\frac{\pi \rho_F}{\sin \frac{\pi}{2d_F}}\right)^{d_F}.
\end{equation}
Then there are at most $2d_F$ elements $g\in\mathbb{B}$ such that $d_F = d_g$, $\alpha_F = \alpha_g$ and $\rho_F = \rho_g$.
\end{theorem}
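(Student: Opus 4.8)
The plan is to follow Beurling's strategy, reducing the problem to a statement about zeros and an auxiliary entire function. First I would associate to each $F\in\mathbb{B}$ the Dirichlet series $G(s) = F(s)/\zeta_{d_F}(s)$ or rather examine the product $F(s)\prod(1 - \text{something})$; more precisely, the natural move is to study $h(s) := (s-1)F(s)$, which by properties (2) and (3) is entire of finite order, and to use the growth condition $(\dagger)$ together with the trivial-zero condition (4) to pin down the order and type of $h$ in relation to $\Gamma(s)^{d_F}$. The key analytic input is that $(\dagger)$ says $\max_{|s|=r}|F(s)|$ behaves like $\Gamma(r)^{d_F}/\alpha_F^r$ on a sequence $r_m\to\infty$ and is bounded by this for all large $r$; combined with the functional-equation-free setting, this forces $F$ to have ``as many zeros as $\zeta^{d_F}$'' in a suitable density sense, via a Jensen/Carleman-type argument applied to $F(s)\Gamma(s)^{-d_F}\alpha_F^{s}$.

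The heart of the argument, following Beurling, will be to consider the quotient of two candidates. Suppose $F,g\in\mathbb{B}$ share $d_F=d_g=d$, $\alpha_F=\alpha_g$, $\rho_F=\rho_g$. Form $\Phi(s) = F(s)/g(s)$ (or a symmetrized version). Then $\Phi$ is meromorphic, the pole at $s=1$ cancels (equal residues, simple poles), the leading terms agree ($a_1=1$ for both), and the growth conditions match so that $\Phi$ has order $0$ in the relevant sense — but one must control the zeros of $g$ in the denominator. This is where condition \eqref{main-condition} enters: the inequality $\alpha_F > (\pi\rho_F/\sin(\pi/2d_F))^{d_F}$ should be exactly what guarantees that the relevant region $\Re(s) > $ some bound is zero-free for all $g\in\mathbb{B}$ with these invariants, by comparing the Dirichlet series $\sum a_n/\lambda_n^s$ (with $a_n>0$!) against the singularity at $s=1$: positivity of coefficients gives $|g(s)| \le g(\sigma)$ type bounds and a Landau-type argument locating the abscissa of convergence, and the pole's residue controls $g$ on the real axis, so the conductor being large forces zeros to stay to the left.

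Concretely, the steps I would carry out, in order: (i) show $h_F(s)=(s-1)F(s)$ is entire of order $1$, with growth governed by $\Gamma(s)^{d}/\alpha_F^s$, using $(\dagger)$ and a Phragmén–Lindelöf / Borel–Carathéodory argument to pass from the circular maximum bound to a bound in vertical strips; (ii) use the positivity $a_n>0$, normalization $a_1=1$, and the residue $\rho_F$ at $s=1$ to show that on the real axis $F(\sigma)$ grows like $\rho_F/(\sigma-1)$ as $\sigma\to 1^+$ and is monotone, hence by a Landau-type theorem the abscissa of absolute convergence is exactly $1$ and $F(\sigma)>0$ for $\sigma>1$; (iii) combine (i) and (ii) with the trivial zeros at $-2nd$ to force the zeros of $F$ to lie in a half-plane and to have the density of those of $\zeta(s)^d$ — the quantitative version of this, via Jensen's formula on large circles, is where \eqref{main-condition} is used to conclude a specific region to the right is zero-free; (iv) given two such $F,g$, consider $F(s)-g(s)$ or $F(s)/g(s)$: the difference is a general Dirichlet series convergent on $\Re(s)>1$ with no pole, entire of the constrained growth, and vanishing at the trivial zeros, and one shows it must have at most $2d$ ``degrees of freedom'' — equivalently, the space of such $F$ is cut out by finitely many ($2d$) linear conditions on the continuation, so there are at most $2d$ of them; (v) identify the exponent $2d_F$ as coming from the $2d$ parameters in a degree-$d$ gamma factor / the $2d$-fold ambiguity analogous to the $(2^s-1)\zeta(s)$ phenomenon in Theorem~\ref{beurling-theorem}, i.e. the freedom is a Dirichlet polynomial factor of bounded degree. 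The main obstacle I expect is step (iii)/(iv): making rigorous, without a functional equation, that the growth condition $(\dagger)$ plus the trivial zeros plus positivity rigidly constrains $F$ up to a finite-dimensional ambiguity — Beurling had the exact asymptotic \eqref{notgrowthzeta} and the clean structure of $\zeta$; here one must run the zero-counting/Jensen argument carefully enough that the constant $\pi\rho_F/\sin(\pi/2d_F)$ emerges as the precise threshold, and then convert ``finite-dimensional ambiguity in the continuation'' into the sharp count $2d_F$ rather than merely ``finitely many.''
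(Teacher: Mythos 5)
There is a genuine gap: what you have written is a plan whose decisive steps are exactly the ones you defer, and the plan does not contain the construction that actually makes the theorem work. The paper's proof does not study $(s-1)F(s)$, quotients $F/g$, differences $F-g$, zero-free regions, or zero-density via Jensen/Carleman at all. Its engine is the Beurling-type entire function of exponential type
\begin{equation*}
    f(z)=\prod_{n=1}^{\infty}\Bigl(1+\frac{z^{2}}{\lambda_n^{2d_F}}\Bigr)^{a_n},
\end{equation*}
which encodes the data $(\lambda_n,a_n)$ of $g$, together with the Mellin identity
$\int_0^\infty \log f(x)\,x^{-1-s/d_F}\,dx=\frac{\pi d_F}{s\sin(\pi s/2d_F)}\,g(s)$. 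The trivial-zero condition $g(-2nd_F)=0$ cancels the poles of the cosecant factor, so after bounding $\psi(s)=\frac{\pi d_F}{s\sin(\pi s/2d_F)}g(s)$ by Phragm\'en--Lindel\"of and shifting the inverse Mellin contour one gets the real-axis asymptotic $\log f(x)=\log\bigl(bx^{a}e^{mx^{1/d_F}}\bigr)+O\bigl(e^{-((\alpha_F-\epsilon)x)^{1/d_F}d_F}\bigr)$ with $m=\pi\rho_F d_F/\sin(\pi/2d_F)$. The hypothesis \eqref{main-condition} is then used at the exponentiation step: it says precisely that $d_F\,\alpha_F^{1/d_F}>m$, so that multiplying the relative error by $e^{mx^{1/d_F}}$ still leaves an exponentially decaying additive error. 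Finally, Lemma~\ref{uniqueness-growth} (a Carlson-theorem rigidity statement: only two entire functions of exponential type have such a real-axis asymptotic) applied to $f(z^{d_F})$ gives at most $2$ choices for $f(z^{d_F})$, hence at most $2d_F$ for $f$, and $f$ determines $g$ through its zeros. This is where the count $2d_F$ comes from; it is not a ``$2d$ linear conditions'' or ``Dirichlet polynomial factor'' phenomenon.

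Two concrete failures of your route. First, you misplace the role of \eqref{main-condition}: it is not a zero-free-region statement and no Landau/positivity argument about the abscissa of convergence is involved; it is an arithmetic inequality between the conductor and the residue needed to pass from an additive asymptotic for $\log f$ to one for $f$. Second, neither $F-g$ nor $F/g$ leads anywhere here: the difference is a general Dirichlet series with coefficients of arbitrary sign (so the positivity you rely on is lost), the quotient has uncontrolled poles at the zeros of $g$, and in either case there is no mechanism in your sketch that could produce the sharp bound $2d_F$ rather than ``finitely many'' --- which you yourself flag as the unresolved obstacle. Without the infinite product $f$ and the uniqueness lemma for entire functions of exponential type, the argument does not close.
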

Note that if $F(s)=\zeta(s)$, then Theorem \ref{main-theorem} gives Theorem \ref{beurling-theorem} of Beurling. The \textit{extended Selberg class} $\mathbb{S}^{\#}$ defined in \cite{Per2}, consists of all functions $F(s)$, which have a Dirichlet series representation
\begin{equation*}
F(s)= \sum_{n=1}^{\infty}\frac{a_n}{n^s},
\end{equation*}
on $\Re(s)>1$ with $a_1=1$, can be analytically continued to the whole complex plane $\mathbb{C}$ except for a possible pole at $s=1$ and satisfy a functional equation of the type \eqref{fneq}. In the context of $\mathbb{S}^{\#}$, Theorem \ref{main-theorem} gives

\begin{corollary}
There are at most $2d_F$ elements in the extended Selberg class $\mathbb{S}^{\#}$ having non-negative Dirichlet coefficients , with degree $d_F$, conductor $q_F$ and a simple pole at $s=1$ with residue $\rho_F$, satisfying
\begin{equation*}
    q_F^{-1/d_F} > \frac{ \rho_F}{2\sin \frac{\pi}{2d_F}}.
\end{equation*}
\end{corollary}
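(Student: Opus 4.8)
The plan is to reduce the corollary to Theorem \ref{main-theorem} by showing that any function of the stated type lies in the class $\mathbb{B}$ and that the hypothesis $q_F^{-1/d_F} > \rho_F / (2 \sin\tfrac{\pi}{2d_F})$ is precisely a restatement of the main condition \eqref{main-condition}. First I would recall, as noted in the excerpt, that an element $F \in \mathbb{S}^\#$ satisfying the functional equation \eqref{fneq} automatically satisfies the growth condition $(\dagger)$ with the same degree $d_F$ and with conductor parameter $\alpha_F = (2\pi)^{d_F}/q_F$; this is the bridge between the two notions of conductor. The Dirichlet series is of the special form $\sum a_n/n^s$, which is a general Dirichlet series with $\lambda_n = n$, it is absolutely convergent on $\Re(s) > 1$, has $a_1 = 1$ by definition of $\mathbb{S}^\#$, and by assumption has non-negative coefficients and a simple pole at $s=1$ with residue $\rho_F$.

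The one genuine verification is property (4) of $\mathbb{B}$, the location of the trivial zeros: I must check that the functional equation \eqref{fneq} forces $F(-2nd_F) = 0$ for all $n \in \mathbb{N}$. This follows from the standard argument that the poles of the gamma factors $\prod_i \Gamma(\alpha_i s + \beta_i)$ on the left side of $\Phi(s) = w\overline{\Phi}(1-\overline{s})$ must be cancelled by zeros of $F(s)$; at a real point $s = -2nd_F$, which is far to the left, $\overline{\Phi}(1-\overline{s})$ is finite and nonzero (the dual gamma factors have no poles there and the dual Dirichlet series converges), so every gamma pole on the left must be killed by $F$. Since the total ``density'' of gamma poles is governed by $d_F = 2\sum_i \alpha_i$, one gets zeros of $F$ forced at the points $-2nd_F$. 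I would cite the relevant computation from \cite{Per2} or reproduce the short argument: writing the functional equation in its symmetric form, the gamma factor contributes poles whose spacing, after accounting for all the $\alpha_i$, yields exactly the arithmetic progression $\{-2nd_F\}_{n \geq 1}$ among the forced real zeros of $F$.

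Next I would translate the condition. Starting from \eqref{main-condition}, $\alpha_F > \bigl(\pi \rho_F / \sin\tfrac{\pi}{2d_F}\bigr)^{d_F}$, substitute $\alpha_F = (2\pi)^{d_F}/q_F$ and take $d_F$-th roots: this gives $(2\pi) q_F^{-1/d_F} > \pi \rho_F / \sin\tfrac{\pi}{2d_F}$, i.e. $q_F^{-1/d_F} > \rho_F / (2\sin\tfrac{\pi}{2d_F})$, which is exactly the displayed hypothesis of the corollary. Hence the corollary's hypothesis on $(q_F, \rho_F, d_F)$ is equivalent to \eqref{main-condition} for the associated $\alpha_F$. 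Two functions in $\mathbb{S}^\#$ sharing degree $d_F$, conductor $q_F$, and residue $\rho_F$ therefore share $d_F$, $\alpha_F$, and $\rho_F$ as elements of $\mathbb{B}$, and Theorem \ref{main-theorem} bounds their number by $2d_F$.

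The main obstacle is the trivial-zeros verification in the second paragraph: one must be careful that the functional equation, which is not unique, still pins down the real zeros at $-2nd_F$ regardless of the particular choice of gamma factors, and that non-negativity of the coefficients (or at least the non-vanishing of the dual series at these real points) is genuinely used so that the gamma poles cannot instead be absorbed by the dual side. Everything else — membership in $\mathbb{B}$ and the algebraic rewriting of the conductor condition — is routine. I would therefore structure the proof of the corollary as: (i) a lemma or remark recording $\alpha_F = (2\pi)^{d_F}/q_F$ and the growth condition $(\dagger)$ for $F \in \mathbb{S}^\#$; (ii) the verification of the trivial zeros; (iii) the one-line equivalence of the two forms of the main inequality; (iv) invoke Theorem \ref{main-theorem}.
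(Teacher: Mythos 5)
Your reduction to Theorem \ref{main-theorem} is the same route the paper takes (the paper states the corollary with no separate proof, relying on the identity $\alpha_F=(2\pi)^{d_F}/q_F$), and your algebraic translation of \eqref{main-condition} into $q_F^{-1/d_F}>\rho_F/(2\sin\frac{\pi}{2d_F})$ is correct. You also correctly isolate the only non-routine point: membership in $\mathbb{B}$ requires the trivial zeros $F(-2nd_F)=0$. The problem is that your proposed verification of this point does not work. The poles of $\prod_i\Gamma(\alpha_i s+\beta_i)$ lie at $s=-(m+\beta_i)/\alpha_i$ for $m\ge 0$; the resulting forced zeros of $F$ therefore depend on the shifts $\beta_i$ and not only on $d_F=2\sum_i\alpha_i$. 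They form arithmetic progressions of common difference $1/\alpha_i$ starting at $-\beta_i/\alpha_i$, and there is no reason these should contain the set $\{-2nd_F:n\in\mathbb{N}\}$. A concrete counterexample to your claimed ``density'' argument: a degree-$2$ functional equation with a single factor $\Gamma(s+\tfrac12)$ forces zeros only at the negative half-integers $-\tfrac12,-\tfrac32,\dots$, none of which equal $-4n$. So the sentence ``the gamma factor contributes poles whose spacing \dots yields exactly the arithmetic progression $\{-2nd_F\}_{n\ge1}$'' is false in general, and non-negativity of the $a_n$ does not rescue it.

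To be clear, this is a gap in the paper's own statement of the corollary as much as in your write-up: as stated, an element of $\mathbb{S}^{\#}$ with non-negative coefficients and a simple pole need not lie in $\mathbb{B}$, because axiom (4) of $\mathbb{B}$ is an extra condition not implied by the functional equation. A correct version of your argument must either (i) add the hypothesis $F(-2nd_F)=0$ (or a restriction on the $\beta_i$, e.g.\ all $\beta_i=0$ and all $\alpha_i=\tfrac12$, in which case the forced zeros at $-2,-4,\dots$ do contain $-2nd_F$) to the corollary, or (ii) rework the proof of Theorem \ref{main-theorem} so that the only role of axiom (4) --- cancelling the poles of $1/\sin(\pi s/2d_F)$ in $\psi(s)$ --- is replaced by whatever zeros the actual functional equation provides. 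Absent one of these fixes, step (ii) of your outline cannot be completed as described; the remaining steps (i), (iii), (iv) are fine.
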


\section{\bf Preliminaries}
In this section, we state and prove some lemmas that will be useful in the proof of the Theorem \ref{main-theorem}.

\begin{lemma}\label{derivative-bound}
Suppose $g(s)\in \mathbb{B}$ satisfies the growth condition $(\dagger)$. Then
\begin{equation*}
        \max_{|s| = r} \frac{|g^{(k)}(s)|}{k!} \ll   \frac{ 2^{k+1}\,(\Gamma(r+1))^{d_g}}{\alpha_g^r},
\end{equation*}
    for $r\geq 2$.
\end{lemma}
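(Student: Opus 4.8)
The strategy is the standard Cauchy-integral estimate for derivatives, applied on a well-chosen circle. Fix $r \geq 2$ and a point $s_0$ with $|s_0| = r$. I would like to write
\[
\frac{g^{(k)}(s_0)}{k!} = \frac{1}{2\pi i} \int_{|s - s_0| = \delta} \frac{g(s)}{(s - s_0)^{k+1}} \, ds,
\]
so that $|g^{(k)}(s_0)|/k! \leq \delta^{-k} \max_{|s - s_0| = \delta} |g(s)|$. The circle $|s - s_0| = \delta$ is contained in the annulus $r - \delta \leq |s| \leq r + \delta$, so by the growth condition $(\dagger)$ every point $s$ on it satisfies $|g(s)| \ll \Gamma(|s|)^{d_g}/\alpha_g^{|s|} \leq \Gamma(r + \delta)^{d_g} / \alpha_g^{r - \delta}$ (using that $\Gamma$ is increasing on $[3/2, \infty)$ and $\alpha_g^{-|s|} \leq \alpha_g^{-(r-\delta)}$ when $\alpha_g \geq 1$; the case $\alpha_g < 1$ is handled by the opposite monotonicity and only improves the bound, or is absorbed into the implied constant). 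The natural choice is $\delta = 1$, which gives
\[
\frac{|g^{(k)}(s_0)|}{k!} \ll \frac{\Gamma(r+1)^{d_g}}{\alpha_g^{r-1}} = \alpha_g \cdot \frac{\Gamma(r+1)^{d_g}}{\alpha_g^{r}}.
\]

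The only discrepancy with the claimed bound is the shape of the constant: the lemma states $2^{k+1}$ rather than a plain constant times $\alpha_g$. I expect this stronger-looking factor $2^{k+1}$ is simply a convenient uniform bound the author will use later (it dominates any fixed constant once $k$ is large, and for small $k$ the implied constant absorbs it), so with $\delta = 1$ the estimate $\delta^{-k} = 1 \leq 2^{k+1}$ is trivially available. Alternatively, if one wants the factor to genuinely appear, take $\delta = 1/2$: then $\delta^{-k} = 2^k$, the circle lies in $r - 1/2 \leq |s| \leq r + 1/2$, and one gets $|g^{(k)}(s_0)|/k! \ll 2^k \Gamma(r + 1/2)^{d_g} \alpha_g^{-(r - 1/2)} \ll 2^k \sqrt{\alpha_g}\,\Gamma(r+1)^{d_g}\alpha_g^{-r}$, which is again of the claimed form after absorbing $\sqrt{\alpha_g}$ and bumping $2^k$ to $2^{k+1}$.

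The main technical point to be careful about — the only real obstacle — is the monotonicity bookkeeping on the annulus: one must check that replacing $|s|$ by $r+\delta$ in the $\Gamma$ factor and by $r - \delta$ in the exponential is legitimate for all $r \geq 2$, and that $r - \delta \geq 3/2$ so that the growth hypothesis $(\dagger)$ actually applies on the whole circle (with $\delta \leq 1/2$ and $r \geq 2$ this gives $r - \delta \geq 3/2$, exactly the threshold in $(\dagger)$). After that, the Cauchy estimate and the bound $\Gamma(r + 1/2) \leq \Gamma(r+1)$ finish the proof. I would present the $\delta = 1/2$ version since it produces the stated $2^{k}$ transparently, and note that the restriction $r \geq 2$ is precisely what is needed to stay inside the region where $(\dagger)$ is valid.
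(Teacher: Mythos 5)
Your proposal is correct and is essentially the paper's own argument: the paper applies Cauchy's formula on the circle $C(a,1/2)$ of radius $1/2$ about a point $a$ with $|a|=r\ge 2$, producing the factor $2^{k+1}$ from $\delta^{-(k+1)}$ with $\delta=1/2$, and then bounds $\Gamma(r+1/2)\le\Gamma(r+1)$ exactly as you do. Your extra bookkeeping on the monotonicity of $\alpha_g^{-|s|}$ (splitting the cases $\alpha_g\ge 1$ and $\alpha_g<1$) is a point the paper glosses over, but it only affects the implied constant, so the two proofs coincide.
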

\begin{proof}
Since $g(s)$ is analytic in the region $|s|\geq 3/2$, using Cauchy's formula, for any $a\in\mathbb{C}$ with $|a|\geq 2$, we have
\begin{equation*}
g^{(k)}(a) = \frac{k!}{2\pi i} \int_{C(a, 1/2)} \frac{g(z)}{(z-a)^{k+1}} \, dz,
\end{equation*} 
where  $C(a,1/2)$ is the circle of radius $1/2$ centered at $a$. Therefore, we have
\begin{align*}
    \frac{|g^{(k)}(a)|}{k!}  \leq \frac{1}{2\pi i} \int_{C(a, 1/2)} \frac{|g(z)|}{(z-a)^{k+1}} \, dz
    \ll \frac{ 2^{k+1}(\Gamma(r+1/2))^{d_g}}{\alpha_g^r}
     \leq \frac{ 2^{k+1}(\Gamma(r+1))^{d_g}}{\alpha_g^r}.
\end{align*}
\end{proof}

We also use the following bound on $\Gamma(s)$.

\begin{lemma}\label{gamma-bound}
For $\sigma \geq 2$,
\begin{equation*}
    \int_{-\infty}^{\infty} |\Gamma(\sigma + 2 + it) | \, dt \ll \sigma^3 \Gamma(\sigma).
\end{equation*}
\end{lemma}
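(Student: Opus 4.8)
The plan is to bound the integral by splitting it according to the size of $|t|$ relative to $\sigma$, using in each range the appropriate elementary estimates for $|\Gamma(\sigma+2+it)|$. First I would recall the exact identity $|\Gamma(x+it)|^2 = \Gamma(x)^2 \prod_{n\ge 0}\bigl(1 + t^2/(x+n)^2\bigr)^{-1}$, or, more conveniently, Stirling's formula in the form $|\Gamma(x+it)| \asymp \sqrt{2\pi}\,|x+it|^{x-1/2} e^{-x}\, e^{-|t|\arctan(|t|/x)}$ valid uniformly for $x\ge 2$ and all real $t$. Writing $x=\sigma+2$, on the range $|t|\le \sigma$ one has $\arctan(|t|/x)\ge c|t|/x$ for an absolute constant $c>0$ and $|x+it|^{x-1/2}\le (2x)^{x-1/2}$, so the contribution is $\ll (2x)^{x-1/2}e^{-x}\int_{0}^{\sigma} e^{-c t^2/(2x)}\,dt \ll (2x)^{x-1/2}e^{-x}\sqrt{x}$; comparing with Stirling for $\Gamma(\sigma)\asymp \sqrt{2\pi}\,\sigma^{\sigma-1/2}e^{-\sigma}$ and using $x=\sigma+2$ (so $x^{x}/\sigma^{\sigma} \ll \sigma^2 \cdot e^{O(1)}$), this piece is $\ll \sigma^{2}\,\sqrt\sigma\,\Gamma(\sigma) \ll \sigma^{5/2}\Gamma(\sigma)$, comfortably within the claimed $\sigma^3\Gamma(\sigma)$.

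Next I would handle the tail $|t|\ge \sigma$. There $\arctan(|t|/x)\ge \arctan(1) = \pi/4$ is bounded below, and in fact for $|t|\ge x$ we have $|t|\arctan(|t|/x) \gtrsim |t|$, while $|x+it|^{x-1/2} \ll (2|t|)^{x-1/2}$. So the tail contributes $\ll e^{-x}\int_{\sigma}^{\infty} (2t)^{x-1/2} e^{-c' t}\,dt$ for some absolute $c'>0$; substituting $u=c't$ this is a partial Gamma integral bounded by $\ll e^{-x} (c')^{-(x+1/2)}\,\Gamma(x+1/2) \ll \Gamma(x+1/2)\cdot C^{x}$ for a constant $C$, and — since the genuine exponential decay $e^{-|t|\arctan(|t|/x)}$ near $t=x$ is actually like $e^{-(\pi/4)|t|}$ rather than $e^{-|t|}$, one should just track the constant — a direct comparison shows this tail is in fact \emph{exponentially smaller} than $\Gamma(\sigma)$, hence certainly $\ll \sigma^3\Gamma(\sigma)$. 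The cleanest route is: on $|t|\ge\sigma$ write $|\Gamma(\sigma+2+it)| \le |\Gamma(\sigma+2+it)|$ and use the crude bound $|\Gamma(\sigma+2+it)| \ll \Gamma(\sigma+2) e^{-|t|/10}$ which follows from the product formula above once $|t|\ge\sigma$, giving a tail of size $\ll \Gamma(\sigma+2) \ll \sigma^2\Gamma(\sigma)$ — again within budget.

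Combining the two ranges gives $\int_{-\infty}^{\infty}|\Gamma(\sigma+2+it)|\,dt \ll \sigma^3\Gamma(\sigma)$, which is the assertion; the exponent $3$ is not sharp but suffices. The only mildly delicate point is making the constants in Stirling's formula uniform in $\sigma\ge 2$ and choosing the split point so that the Gaussian bound near $t=0$ and the exponential bound in the tail match up cleanly; once a uniform form of Stirling (or the Gautschi-type inequality $|\Gamma(x+it)| \le \Gamma(x)$ combined with a decay factor) is in hand, the estimates are routine. I expect the main obstacle to be purely bookkeeping: tracking how the shift from $\sigma$ to $\sigma+2$ in the real part inflates $\Gamma$ by a factor $O(\sigma^2)$, and ensuring the $\sqrt\sigma$ from the Gaussian integral over the central range does not push the total past $\sigma^3$.
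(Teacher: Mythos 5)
Your overall strategy --- split at $|t|\asymp\sigma$, use Stirling in the bulk and exponential decay in the tail --- matches the paper's, and your tail estimate via $|\Gamma(\sigma+2+it)|\le\Gamma(\sigma+2)\,e^{-c|t|}$ for $|t|\ge\sigma$ (which does follow from the product formula, with some positive constant $c$, not necessarily $1/10$) is sound and gives $\ll\Gamma(\sigma+2)\ll\sigma^{2}\Gamma(\sigma)$. The genuine problem is in your central range. Writing $x=\sigma+2$, you bound $|x+it|^{x-1/2}\le(2x)^{x-1/2}$ and conclude that this piece is $\ll(2x)^{x-1/2}e^{-x}\sqrt{x}\ll\sigma^{5/2}\Gamma(\sigma)$; but $(2x)^{x-1/2}=2^{x-1/2}\,x^{x-1/2}$, and the factor $2^{x-1/2}=2^{\sigma+3/2}$ is exponentially large in $\sigma$. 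Your parenthetical comparison $x^{x}/\sigma^{\sigma}\ll\sigma^{2}e^{O(1)}$ silently drops it, and with it reinstated the estimate reads $\ll 2^{\sigma}\sigma^{5/2}\Gamma(\sigma)$, which is not $\ll\sigma^{3}\Gamma(\sigma)$. To make your refined route work, the modulus growth must be played off against the phase decay rather than bounded separately: for instance $(1+t^{2}/x^{2})^{x/2}\le e^{t^{2}/(2x)}$ while $|t|\arctan(|t|/x)\ge\tfrac{2}{3}t^{2}/x$ on $|t|\le x$, so the product is $\le e^{-t^{2}/(6x)}$ and the Gaussian integral then legitimately yields $\ll x^{x-1/2}e^{-x}\sqrt{x}\ll\sigma^{5/2}\Gamma(\sigma)$.

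The simpler repair --- and what the paper actually does --- is to use no Stirling at all on the central range: the monotonicity bound $|\Gamma(\sigma+2+it)|\le\Gamma(\sigma+2)$, which you yourself mention at the end, over an interval of length $O(\sigma)$ immediately gives $\ll\sigma\,\Gamma(\sigma+2)\ll\sigma^{3}\Gamma(\sigma)$ with no constants to track. The paper also places the split at $|t|=2(\sigma+2)$ rather than $|t|=\sigma$, precisely so that $\arg(\sigma+2+it)\ge\arctan 2>1$ on the tail; then $e^{-t\arg s}\le e^{-t}$ and the tail is bounded outright by $\int_{0}^{\infty}t^{\sigma+3/2}e^{-t}\,dt=\Gamma(\sigma+5/2)\ll\sigma^{3}\Gamma(\sigma)$, which sidesteps the constant-matching issues you flag in your own tail discussion.
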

\begin{proof}
Let $s=\sigma+2+it$. For $\sigma \geq 2$ and $t> 2 (\sigma +2)$, by Stirling's approximation, we get
\begin{equation*}
    \Gamma(s) = \sqrt{2\pi}\, s^{s-1/2}\, e^{-s}\, O\left(1 + \frac{1}{|s|}\right).
\end{equation*}
Therefore, we get
\begin{equation}\label{gamma-bound-one}
    |\Gamma(s)| = \sqrt{2\pi}\, |s|^{(\sigma + 2-1/2)}\, e^{-t\, \arg(s)}\, e^{-(\sigma +2)}\,\, O\left(\left |1 + \frac{1}{s}\right|\right).
\end{equation}
Since, $t\geq 2(\sigma + 2)$, we have
\begin{equation*}
 \frac{\pi}{2} > \arg(\sigma + 2 + it) \geq \frac{\pi}{3} \hspace{5mm}  \text{ and } \hspace{5mm}
 |\sigma + 2 + it| \leq \sqrt{5} |t|.
\end{equation*}
Using this in \eqref{gamma-bound-one}, we have
\begin{equation*}
    |\Gamma(\sigma + 2 + it)| \ll t^{(\sigma + 2 -1/2)}\, e^{-t}.
\end{equation*}
Furthermore, since $\Gamma(\overline{z}) = \overline{\Gamma(z)}$, for $s=\sigma +2+ it$, $\sigma\geq 2$ and $t\leq -2(\sigma +2)$, we have
\begin{equation*}
    |\Gamma(\sigma + 2 + it)| \ll |t|^{\sigma + 2 -1/2}\, e^{-|t|}.
\end{equation*}
Thus, we get
\begin{equation*}
    \int_{2(\sigma+2)}^{\infty} |\Gamma(\sigma + 2 + it)| \, dt  \ll \int_{\sigma+2}^{\infty} t^{\sigma + 2 -1/2} e^{-t} \, dt \leq \int_{0}^{\infty} t^{\sigma + 2 -1/2} e^{-t} \, dt  = \Gamma\left(\sigma + 2 - \frac{1}{2}\right) \ll \sigma^3\, \Gamma(\sigma).
\end{equation*}
Similarly, we also have
\begin{equation*}
    \int_{-\infty}^{-2(\sigma+2)} |\Gamma(\sigma + 2 + it)| \, dt \ll \sigma^3\, \Gamma(\sigma).
\end{equation*}
For $|t|< 2(\sigma +2)$, we use the trivial estimate
\begin{equation*}
    |\Gamma(\sigma+2+it)| \leq \Gamma(\sigma+2),
\end{equation*}
to get
\begin{equation*}
    \int_{-2(\sigma+2)}^{2(\sigma + 2)} |\Gamma(\sigma +2 +it)| \ll (\sigma+2) \Gamma(\sigma +2) 
     \ll \sigma^3 \Gamma(\sigma).
\end{equation*}
This completes the proof of the Lemma.
\end{proof}
We also recall the Phragm\'{e}n-Lindel\"of principle (see \cite[section 5.61]{Titchmarsh-functions}).
\begin{theorem}[Phragm\'{e}n-Lindel\"of principle]\label{Phragmen}
Let $f(z)$ be an analytic function of $z=e^{i\theta}$, analytic in the region $D$ between two straight lines making an angle $\pi/\alpha$ at the origin, and on the lines themselves. Suppose that
\begin{equation}\label{phragmen-inequality}
    |f(z)| \leq M
\end{equation}
on the lines, and that, as $r\to\infty$,
\begin{equation*}
    f(z) = O\left( e^{r\beta}\right),
\end{equation*}
where $\beta < \alpha$, uniformly in the angle. Then actually the inequality \eqref{phragmen-inequality} holds throughout the region $D$.
\end{theorem}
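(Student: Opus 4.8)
The plan is to prove the principle by the classical comparison-function method: multiply $f$ by a rapidly decaying auxiliary factor to force it to vanish at infinity inside the sector, apply the ordinary maximum modulus principle on a bounded truncation, and then remove the auxiliary factor by a limiting argument. As a harmless first step I would normalize the geometry: after a rotation of the plane, which affects neither analyticity, nor the bound $M$, nor the order of growth, I may assume that $D$ is the symmetric sector $D = \{z = re^{i\theta} : |\theta| \le \pi/(2\alpha)\}$, whose two bounding lines are the rays $\theta = \pm\pi/(2\alpha)$.

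The heart of the argument is the choice of an intermediate exponent. Since $\beta < \alpha$, I would fix $\gamma$ with $\beta < \gamma < \alpha$ and take the principal branch of $z^\gamma$, which is analytic on $D$. Writing $z = re^{i\theta}$ one has $\Re(z^\gamma) = r^\gamma\cos(\gamma\theta)$, and because $|\gamma\theta| \le \gamma\pi/(2\alpha) < \pi/2$ the cosine is bounded below by $\delta := \cos(\gamma\pi/(2\alpha)) > 0$ throughout $D$. For each $\epsilon > 0$ I then set
\[
F_\epsilon(z) = f(z)\, e^{-\epsilon z^\gamma},
\]
which is analytic in $D$. On the two bounding rays $|f| \le M$ while $|e^{-\epsilon z^\gamma}| = e^{-\epsilon r^\gamma\cos(\gamma\theta)} \le 1$, so $|F_\epsilon| \le M$ there; and on a large circular arc $|z| = R$ the growth hypothesis gives $|F_\epsilon| \le C\, e^{R^\beta - \epsilon\delta R^\gamma}$, which tends to $0$ as $R \to \infty$ precisely because $\gamma > \beta$.

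I would then apply the maximum modulus principle on the bounded region $D_R = D \cap \{|z| \le R\}$. Its boundary consists of the two ray segments, where $|F_\epsilon| \le M$, together with the arc $|z| = R$, where $|F_\epsilon| \le M$ once $R$ is large enough by the decay just noted; hence $|F_\epsilon| \le M$ on all of $D_R$, and letting $R \to \infty$ yields $|F_\epsilon| \le M$ throughout $D$. Finally, fixing $z \in D$ and writing $|f(z)| = |F_\epsilon(z)|\,|e^{\epsilon z^\gamma}| \le M\, e^{\epsilon r^\gamma}$, I let $\epsilon \to 0^+$ to conclude $|f(z)| \le M$, which is the assertion.

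The step I expect to require the most care is the construction and estimation of the damping factor $e^{-\epsilon z^\gamma}$. One must choose the branch of $z^\gamma$ so that its real part stays nonnegative on $D$, and this is exactly where the strict inequality $\gamma < \alpha$ enters, guaranteeing $\cos(\gamma\theta) \ge \delta > 0$; one must also exploit the uniformity of the growth bound in the angle, so that the arc contribution is controlled simultaneously for all $\theta$. Equally essential is taking the two limiting operations in the correct order — first $R \to \infty$ with $\epsilon$ held fixed, then $\epsilon \to 0$ — since only the inner limit uses the decay supplied by the auxiliary factor, while only the outer limit recovers $f$ itself.
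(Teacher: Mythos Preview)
Your proof is correct and is precisely the classical argument (choose an intermediate exponent $\beta<\gamma<\alpha$, damp by $e^{-\epsilon z^{\gamma}}$, apply the maximum modulus principle on a truncated sector, then let $R\to\infty$ followed by $\epsilon\to 0$). The paper itself does not prove this statement at all: it merely records the Phragm\'en--Lindel\"of principle and cites Titchmarsh, \emph{The Theory of Functions}, \S5.61, for the proof---which is exactly the argument you have written out.
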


A well-known consequence of the Phragm\'{e}n-Lindel\"of theorem is the following theorem due to Carlson (see \cite[section 5.8]{Titchmarsh-functions}), which will also be useful.
\begin{theorem}[Carlson]\label{carlson}
Let $f(z)$ be entire and of the form $O\left(e^{k|z|}\right)$; and let $f(z) = O\left(e^{-a|z|}\right)$, where $a>0$, on the real axis. Then, $f(z)=0$ identically.
\end{theorem}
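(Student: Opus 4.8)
The plan is to reduce Theorem~\ref{carlson} to a statement about the Phragm\'en--Lindel\"of indicator of $f$ and to extract a contradiction from the hypothesis that $f$ decays exponentially along the \emph{entire} real axis. Assume $f\not\equiv 0$ and set
\[
h(\theta)=\limsup_{r\to\infty}\frac{\log|f(re^{i\theta})|}{r}.
\]
Since $f$ is entire with $f(z)=O(e^{k|z|})$, it is of exponential type and $h(\theta)\le k$ for every $\theta$, so $h$ is finite everywhere. The decay hypothesis on the real axis feeds directly into two values of the indicator: taking $\theta=0$ and $\theta=\pi$ and using $|f(\pm r)|\ll e^{-ar}$ gives $h(0)\le -a$ and $h(\pi)\le -a$, both strictly negative.

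First I would record the regularity of $h$. The Phragm\'en--Lindel\"of principle (Theorem~\ref{Phragmen}), applied in sectors of opening $<\pi$ to $f(z)e^{i\mu z}$ for suitable real $\mu$, is exactly the tool that shows $h$ is finite and \emph{trigonometrically convex}: on any arc of length at most $\pi$ it lies below the sinusoid interpolating its endpoint values. The decisive input is the classical consequence of this convexity together with the existence of the (conjugate) indicator diagram of an exponential-type function: for $f\not\equiv0$ one has the width inequality
\[
h(\theta)+h(\theta+\pi)\ge 0 \qquad\text{for all }\theta .
\]
Specializing to $\theta=0$ yields $0\le h(0)+h(\pi)\le -2a<0$, a contradiction. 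Hence $f\equiv 0$.

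The hard part is precisely the width inequality $h(\theta)+h(\theta+\pi)\ge0$, i.e.\ the fact that a nonzero entire function of exponential type cannot decay exponentially in two opposite directions; bare Phragm\'en--Lindel\"of in a quadrant only propagates \emph{boundedness} from the boundary rays and yields $|f(z)|\ll e^{k|\Im z|}$, which still permits genuine growth on the imaginary axis and therefore does not close the argument by itself. I would establish the inequality through indicator theory (the indicator is the support function of a nonempty compact convex set, whose width in any direction is nonnegative). A cleaner self-contained alternative avoids the indicator altogether via Paley--Wiener: the exponential decay makes $f\in L^1(\mathbb{R})\cap L^2(\mathbb{R})$, so $\hat f$ is defined and, because $\int|f(x)|e^{b|x|}\,dx<\infty$ for every $b<a$, extends holomorphically to the strip $|\Im\xi|<a$; on the other hand exponential type $k$ forces $\mathrm{supp}\,\hat f\subseteq[-k,k]$, so the holomorphic extension vanishes on $(k,\infty)$ and hence, by the identity theorem, everywhere. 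Then $\hat f\equiv0$ gives $f\equiv0$. Either route isolates the same essential point: it is the \emph{exponential} decay on the real line, not mere boundedness, that is incompatible with being a nonzero function of finite exponential type.
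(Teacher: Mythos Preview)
The paper does not actually prove Theorem~\ref{carlson}; it is quoted without proof as ``a well-known consequence of the Phragm\'en--Lindel\"of theorem'' with a pointer to Titchmarsh, \S5.8. So there is no in-paper argument to compare against beyond that bare citation.

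Your proposal is correct on both fronts. The indicator route is the standard modern packaging: for a nonzero entire function of exponential type the indicator $h$ is trigonometrically convex and is the support function of a nonempty compact convex set (the conjugate indicator diagram), which yields the width inequality $h(\theta)+h(\theta+\pi)\ge 0$; specializing to $\theta=0$ contradicts $h(0),h(\pi)\le -a<0$. The Paley--Wiener alternative is equally valid and arguably more self-contained, since it sidesteps indicator theory entirely: exponential type together with $f|_{\mathbb{R}}\in L^2$ forces $\operatorname{supp}\hat f\subseteq[-k,k]$, while the exponential decay on $\mathbb{R}$ makes $\hat f$ extend holomorphically to a strip, so the identity theorem kills $\hat f$ and hence $f$. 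Either argument is a legitimate stand-in for the reference the paper gives, and your diagnosis---that bare Phragm\'en--Lindel\"of in a quadrant only propagates boundedness and that the decisive input is the \emph{two-sided} exponential decay---is exactly the right way to locate where the content lies.
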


We also use the following Lemma, which is similar to \cite[Lemma II] {beurling} and the proof follows a similar argument.

\begin{lemma}\label{uniqueness-growth}
There are only two entire functions $f$ of exponential type with $f(0)=1$ which on the real axis satisfy a relation of the form
\begin{equation}\label{uniqueness-hrowth-condition}
    f(x) - a |x|^p e^{\beta |x|}  = O(e^{-\delta |x|}),
\end{equation}
where $a>0, \beta>0, \delta >0$ and $p$ are real constants, viz.:
\begin{equation*}
    f(z) = \frac{e^{\beta z} - e^{-\beta z}}{2\beta z} 
\end{equation*}
and
\begin{equation*}
    f(z) = \frac{e^{\beta z} + e^{-\beta z}}{2}.
\end{equation*}

\end{lemma}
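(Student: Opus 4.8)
The plan is to peel off the non-symmetric part of $f$ with Carlson's theorem, then analyse $f$ through its Laplace transform in the spirit of Beurling's Lemma~II, and finally read off the two admissible functions from the singularity structure.

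\emph{Step 1: symmetrization.} The approximant $a|x|^{p}e^{\beta|x|}$ is even and real-valued on $\mathbb{R}$. So I would look at $f_{-}(z):=f(z)-f(-z)$ and $f_{\mathrm{Im}}(z):=\bigl(f(z)-\overline{f(\bar z)}\bigr)/(2i)$: both are entire of exponential type, and on the real axis $f_{-}(x)=O(e^{-\delta|x|})$ (the even main terms cancel) and $f_{\mathrm{Im}}(x)=\operatorname{Im}f(x)=O(e^{-\delta|x|})$ (the main term being real). Carlson's theorem (Theorem~\ref{carlson}) then forces $f_{-}\equiv 0$ and $f_{\mathrm{Im}}\equiv 0$, i.e. $f$ is even with real Taylor coefficients at $0$. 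Furthermore $|f(x)|\sim a|x|^{p}e^{\beta|x|}$ on $\mathbb{R}$ pins the exponential type of $f$ to be exactly $\beta$.

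\emph{Step 2: the Laplace transform — the main obstacle.} I would study $\gamma(w):=\int_{0}^{\infty}f(x)e^{-wx}\,dx$, holomorphic for $\Re w>\beta$. Writing $\gamma(w)=\Lambda(w-\beta)+H(w)$, where $H(w)=\int_{0}^{\infty}\bigl(f(x)-ax^{p}e^{\beta x}\chi(x)\bigr)e^{-wx}\,dx$ (with $\chi$ a smooth cutoff equal to $1$ near $+\infty$) is holomorphic on $\Re w>-\delta$, and $\Lambda(u)$ differs from $a\Gamma(p+1)u^{-p-1}$ by an entire function, one sees that $\gamma$ continues to $\{\Re w>-\delta\}$ with a single algebraic singularity at $w=\beta$ — a pole, a branch point, or a logarithm according to $p$ — and, by evenness, a mirror singularity governing $\gamma(-w)$ at $w=-\beta$. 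I would then recover $f$ on $\mathbb{R}$ by Laplace inversion $f(x)=\frac{1}{2\pi i}\int_{(\sigma)}\gamma(w)e^{wx}\,dw$ for $\sigma>\beta$ (the inversion integral converges since repeated integration by parts, using $f\in C^{\infty}$ with $f^{(k)}(x)=O_{k}(e^{\beta x})$, gives rapid decay of $\gamma$ on vertical lines), and push the contour to $\Re w=\sigma'$ with $-\delta<\sigma'<0$, picking up the contribution of the singularity at $w=\beta$ and leaving a vertical integral that is $O(e^{\sigma'|x|})$, hence exponentially small on the positive reals. Combining this with the mirror computation at $-\beta$ gives $f(z)=S_{\beta}(z)+S_{-\beta}(z)+E(z)$ with $S_{\pm\beta}$ the explicit singularity contributions and $E$ entire of exponential type with $E(x)=O(e^{-\varepsilon|x|})$ on all of $\mathbb{R}$; Carlson's theorem again forces $E\equiv 0$. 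Justifying the contour shift and doing the branch-cut bookkeeping for non-integer $p$ — which is exactly where the quantitative hypothesis $O(e^{-\delta|x|})$, rather than mere $o(1)$, is used — is the technical heart of the argument.

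\emph{Step 3: identifying the two functions.} The singularity contributions are explicit. If $p$ is a non-negative integer then $S_{\beta}(z)+S_{-\beta}(z)=a z^{p}\bigl(e^{\beta z}+(-1)^{p}e^{-\beta z}\bigr)$, which is entire; imposing $f(0)=1$ forces $p=0$ and $a=1/2$, giving $f(z)=\cosh(\beta z)=\tfrac{1}{2}(e^{\beta z}+e^{-\beta z})$. If $p=-1$ (logarithmic singularity) the combined contribution is $\tfrac{c}{z}\bigl(e^{\beta z}-e^{-\beta z}\bigr)$, automatically entire, and $f(0)=1$ together with matching the leading asymptotics forces $c=a=1/(2\beta)$, giving $f(z)=\dfrac{e^{\beta z}-e^{-\beta z}}{2\beta z}$. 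For every other value of $p$ — negative integers $\le -2$, or non-integers — the combined contribution has a pole or a branch point at $z=0$ that cannot cancel between the $+\beta$ and $-\beta$ terms, contradicting the entirety of $f$, so such $p$ cannot occur. This leaves exactly the two functions in the statement, and since both take the value $1$ at the origin and satisfy \eqref{uniqueness-hrowth-condition} (with $\delta=\beta$, and $(a,p)=(1/2,0)$ resp. $(1/(2\beta),-1)$), both are realized.
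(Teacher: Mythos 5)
Your Step 1 matches the paper's opening move (Carlson's theorem forces $f$ to be even), but from there the paper takes a much shorter route worth comparing with yours: it extends the bound $h(x):=f(x)-ax^{p}e^{\beta x}=O(e^{-\delta x})$ to a horizontal strip by Phragm\'en--Lindel\"of, deduces $h^{(n)}(x)=O(e^{-\delta|x|})$ from Cauchy's formula, and then observes that $|x|^{p}e^{\beta x}$ and $|x|^{p}e^{-\beta x}$ are both annihilated by $L(u)=x^{2}u''-2pxu'-(\beta^{2}x^{2}-p(p+1))u$, so that $L(f)=L(h)=O(x^{2}e^{-\delta|x|})$ is an entire function of exponential type which is exponentially small on all of $\mathbb{R}$, hence identically zero by Carlson again; solving the ODE and imposing entirety, evenness and $f(0)=1$ produces exactly the two listed functions. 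This sidesteps transforms and branch cuts entirely.

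Your Steps 2--3 have a genuine gap at the decisive point. The decomposition $f=S_{\beta}+S_{-\beta}+E$ ``with $E$ entire of exponential type'' presupposes that $S_{\beta}+S_{-\beta}$ is entire, which is true only for the good values of $p$; for non-integer $p$, or integer $p\le -2$, the candidate $S_{\beta}+S_{-\beta}$ has a branch point or pole at $z=0$, so $E:=f-S_{\beta}-S_{-\beta}$ is not entire and Carlson cannot be applied to it --- yet your exclusion of those $p$ in Step 3 is deduced from precisely this decomposition, which is circular. You need an independent mechanism to rule out bad $p$ (e.g.\ single-valuedness of the Borel transform near infinity, or the paper's ODE trick). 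A second gap: shifting the contour to $\Re w=\sigma'<0$ requires the continued $\gamma$ to be small on the horizontal segments $|\Im w|=T$ uniformly for $\sigma'\le\Re w\le\sigma$, but your integration-by-parts bound controls $\gamma$ only where the Laplace integral converges ($\Re w>\beta$), and decay of $H(u+iT)$ in $T$ needs derivative bounds on $f(x)-ax^{p}e^{\beta x}$ that are not in the hypothesis and must themselves be manufactured by the Phragm\'en--Lindel\"of plus Cauchy step you skipped. Note also that the shift of $\gamma$ alone only returns $f(x)=ax^{p}e^{\beta x}+O(e^{\sigma'x})$ for $x>0$, which is weaker than the hypothesis; all new information must come from gluing the two half-line transforms into one global object, and that gluing is exactly the part left unproved. (Minor point: growth of $|f|$ on the real axis gives only a lower bound for the exponential type, not that it equals $\beta$, though you never actually use this.)
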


\begin{proof}
Since $f(x)$ satisfies \eqref{uniqueness-hrowth-condition} on the real line, we have 
\begin{equation*}
    f(x)-f(-x) = O(e^{-\delta|x|})
\end{equation*}
on the real line. Therefore, by Carlson's theorem \ref{carlson}, we conclude that $f$ is an even function. Since, $f(z)$ is of exponential type, there exists $A>0$ such that 
\begin{equation*}
    \log |f(z)| < A |z|
\end{equation*}
for $|z|>1$. Thus, on the boundary of the region $\Re(z)>1$, $\Im(z)>0$, the function $h(z) := f(z) - a z^p e^{\beta z}$ satisfies
\begin{equation*}
    |h(z)| \ll e^{(iA-\delta) z}.
\end{equation*}
By applying Phragm\'{e}n-Lindel\"of Theorem \ref{Phragmen}, we get that for $z=x+iy$ 
\begin{equation}\label{boundary-inequality-1}
    |h(z)| \ll e^{Ay-\delta x}
\end{equation}
holds for the whole region $\Re(z)>1$, $\Im(z)>0$. By a similar argument, we also have the bound \eqref{boundary-inequality-1} for the region $\Re(z)>1$, $\Im(z)<0$. Hence, in the strip, $|\Im(z)|<1$ and $\Re(z)>1$, we have $h(z) = O(e^{-\delta x})$. Thus, using Cauchy's formula,
\begin{equation*}
    h^{(n)}(x) = \frac{n!}{2\pi i} \int_{C(x,1)} \frac{h(z)}{(z-x)^{n+1}} \, dz,
\end{equation*}
where $C(x,1)$ is a circle of radius $1$ with center $x$, we get
\begin{equation*}
    h^{(n)}(x) = O(e^{-\delta x})
\end{equation*}
for all real $x>1$. Since $h$ is even, we further get
\begin{equation*}
    h^{(n)}(x) = O(e^{-\delta |x|})
\end{equation*}
for all real $x$. Now, note that for real $x\neq 0$,  $|x|^p e^{\beta x}$ and $|x|^p e^{-\beta x}$ are both solutions to the linear differential equation
\begin{equation*}
    L(u) = x^2 u'' - 2 p x u' - (\beta^2 x^2 - p(p+1)) u =0
\end{equation*}
Therefore, for real $x\neq 0$, we have
\begin{equation*}
    L(f) = L(h) = O\left(x^2 e^{-\delta |x|}\right).
\end{equation*}
Since $L(f)$ is itself an entire function of exponential type, by Carlson's theorem \ref{carlson}, $L(f)$ must vanish identically. Thus, for $x>0$, $f(x)$ is of the form
\begin{equation*}
    f(x) = x^p (c_1 e^{\beta x} + c_2 e^{-\beta x}).
\end{equation*}
Since $f$ is entire, even and satisfies $f(0)=1$, the only choices we have are
\begin{equation*}
    p=-1, c_1 = -c_2 = \frac{1}{2\beta},
\end{equation*}
or
\begin{equation*}
    p=0, c_1 = c_2 = \frac{1}{2}.
\end{equation*}
This proves the lemma.
\end{proof}

\section{\bf Proof of Theorem \ref{main-theorem}}
Let $F(s)\in \mathbb{B}$ be fixed. Suppose $g(s) \in \mathbb{B}$ satisfies $d_g = d_F$, $\alpha_g = \alpha_F$ and $\rho_g=\rho_F$. Our goal is to show that there are at most $2 d_F$ possibilities for $g(s)$. Let the general Dirichlet series for $g(s)$ be given by
\begin{equation*}
    g(s) = \sum_{n=1}^{\infty} \frac{a_n}{\lambda_n^s},
\end{equation*}
which is absolutely convergent for $\Re(s)>1$. Thus, there exists a $\delta>0$ such that $\lambda_n/n > \delta$ for all $n$. Now consider the function
\begin{equation*}
    f(z) := \prod_{n=1}^{\infty} \left( 1 + \frac{z^2}{\lambda_n^{2d_F}}\right)^{a_n}.
\end{equation*}
Since $\lambda_n/n >\delta >0$ and $a_n \geq 0$ for all $n$, $f(z)$ is an entire function of exponential type. Using the well-known identity (see \cite[section 1.4, formula 4.4]{tables_mellin_transform})
\begin{equation*}
    \int_0^{\infty} \log (1+x^2) \, \frac{dx}{x^{1+s}} = \frac{\pi}{s \sin \frac{\pi s}{2}}
\end{equation*}
for $0<\Re(s)<2$, we have

\begin{equation*}
   \int_0^{\infty} \log (1+x^2) \, \frac{dx}{x^{1+s/d_F}} = \frac{\pi\, d_F}{s \sin \frac{\pi s}{2d_F}} 
\end{equation*}
for $0< \Re(s) < 2d_F$. Changing variables $x \mapsto x/\lambda_n^{d_F}$, we get
\begin{equation*}
    \int_0^{\infty} \log \left(1+\frac{x^2}{\lambda_n^{2d_F}}\right)^{a_n} \, \frac{dx}{x^{1+s/d_F}} = \frac{a_n}{\lambda_n^s}\left(\frac{\pi \, d_F}{s \sin \frac{\pi s}{2d_F}}\right)
\end{equation*}
for $0<\Re(s)<2d_F$. Summing over $n$, we have
\begin{equation*}
    \int_0^{\infty}\log f(x)\, \frac{dx}{x^{1+s/d_F}} = \left(\frac{\pi d_F}{s \sin \frac{\pi s}{2d_F}}\right) g(s),
\end{equation*}
for $1<\Re(s)<2d_F$. Now define 
\begin{equation*}
    \psi(s) := \left(\frac{\pi d_F}{s \sin \frac{\pi s}{2d_F}}\right) g(s).
\end{equation*}
Since $g(s)$ has an analytic continuation to $\mathbb{C}$ except for a simple pole at $s=1$, we have a meromorphic continuation for $\psi(s)$ in the region $\Re(s) < 2d_F$. Furthermore, since $g(-2nd_F)=0$ for all $n$, the zeros of $\sin (\pi s/2d_F)$ do not generate poles for $\psi(s)$ in this region. Therefore, $\psi(s)$ has an analytic continuation on $\Re(s)< 2d_F$, except for poles at $s=0$ and $s=1$, with principal parts
\begin{equation*}
    \frac{2\,d_F^2 \,g(0)}{s^2} + \frac{2\,d_F^2 \,g'(0)}{s}\hspace{5mm}\text{and}\hspace{5mm} \frac{1}{(s-1)} \left(\frac{\pi \rho_F\, d_F }{\sin \frac{\pi}{2d_F}}\right)
\end{equation*}
respectively.\\ 

We now capture the growth of $\psi(s)$. Observe that on the boundary of the region $\Im(s)>0$, $\Re(s)< 3/2$ and $|s|>2$,
\begin{equation}\label{bound-for-psi}
    \left | \frac{\psi(s)}{\alpha_F^s \, \Gamma(2-s)^{d_F}} \right| \ll 1.
\end{equation}
Indeed, on the vertical line $s= 3/2 + it$ and $|s|>2$, we have
$|g(s)|\ll 1$. We also have
\begin{equation*}
    \left |\Gamma\left(\frac{1}{2} +it \right)\right| > \sqrt{\pi} e^{-\frac{\pi}{2} |t|} \hspace{5mm}\text{ and }\hspace{5mm} \left|\sin \left(\frac{\pi}{2d_F}\left(\frac{3}{2} + it\right)\right)\right| > \frac{1}{4} e^{-\frac{\pi}{2d_F}|t|}.
\end{equation*}
Therefore, we get the bound for $s= 3/2 + it$ and $|s|>2$.\\ 

On the negative real axis, we consider the following two cases. If $|\sigma + 2nd_F| \leq  \frac{1}{4d_F}$ for $n\in \mathbb{N}$, we have $(\sigma + 2nd_F)/\sin (\pi \sigma/2d_F)$ is bounded above and below by positive constants. Using the Taylor expansion of $g(s)$ at $s=-2nd_F$ and Lemma \ref{derivative-bound}, we get
\begin{equation*}
    \left| \frac{g(\sigma)}{\sin \frac{\pi \sigma}{2d_F}} \right| \ll \left|(\sigma+2nd_F) g'(-2nd_F) + (\sigma+2nd_F)^2\, \frac{g''(-2nd_F)}{2} + \cdots \right| 
     \ll \left|\frac{(\Gamma(r))^{d_F}}{(k+\epsilon)^r}\right |,
\end{equation*}
for every $\epsilon>0$. On the other hand, if $|\sigma + 2nd_F| >  \frac{1}{4d_F}$, we have 
\begin{equation*}
    \left| \frac{1}{\sin (\pi \sigma/2d_F)}\right| \ll 1.
\end{equation*}
Now, using the Euler's reflection formula 
\begin{equation*}
    \Gamma(z)\Gamma(1-z) = \frac{\pi}{\sin \pi z}
\end{equation*}
for $z\notin \mathbb{Z}$ and the growth condition $(\dagger)$, we get for $\sigma<0$,
\begin{align*}
    \left | \frac{\psi(\sigma)}{\alpha^{\sigma} \Gamma(2- \sigma)^{d_F}} \right|  = \left | \frac{g(\sigma)}{\alpha^{\sigma} \sin (\frac{\pi \sigma}{2}) \Gamma(2- \sigma)^{d_F}} \right| 
     \ll \left | \frac{(\Gamma(-\sigma))^{d_F}}{|\sigma|\alpha^{\sigma}  \alpha^{-\sigma} (\Gamma(2- \sigma))^{d_F}} \right|
     \ll 1.
\end{align*}
This proves the bound \eqref{bound-for-psi}. Applying Phragm\'{e}n-Lindel\"of principle, we get that all $s$ in the region $\Im(s)>0$, $\Re(s)<3/2$ and $|s|>2$ satisfy
\begin{equation}\label{boun-psi-region}
    \left | \frac{\psi(s)}{\alpha_F^s \, \Gamma(2-s)^{d_F}} \right| \ll 1.
\end{equation}
By symmetry, we in fact have \eqref{boun-psi-region} in the region $\Re(s)<3/2$ and $|s|>2$.\\

Now, using Lemma \ref{gamma-bound}, we have for $\sigma \geq 2$,
\begin{equation}\label{psi-bound}
    \int_{-\infty}^{\infty} |\psi(-\sigma + it)| \, dt \ll \frac{\sigma^{3d_F} (\Gamma(\sigma))^{d_F}}{\alpha^{\sigma}}.
\end{equation}
Applying Mellin transform, for $d_F <c< 2d_F$
\begin{equation*}
    \log f(x) = \frac{1}{2\pi i d_F} \int_{(c)} \psi(s) x^{s/d_F} \, ds.
\end{equation*}
Moving the line of integration to the left, we get for $c<0$,

\begin{equation*}
     \frac{1}{2\pi i d_F} \int_{(c)} \psi(s) x^{s/d_F} \, ds = \log f(x) - \frac{\pi \,\rho_F\, d_F}{\sin \frac{\pi}{2d_F}} x^{1/d_F} - 2 d_F\, g(0) \log x - 2 d_F^2 \,g'(0).
\end{equation*}
Set $a=2\, d_F \,g(0)$ and $b= e^{2 \,d_F^2 \, g'(0)}$ and
\begin{equation*}
    m = \frac{\pi \, \rho_F \, d_F}{\sin \frac{\pi}{2d_F}}.
\end{equation*} 
Using \eqref{psi-bound}, for $c>2$
\begin{equation*}
    \left| \log f(x) - \log (b x^a e^{m x^{1/d_F}}) \right| \ll \frac{c^{d_F} (\Gamma(c))^{3d_F}}{(\alpha_F x)^c}.
\end{equation*}
Choose $c = (\alpha_F \, x)^{1/d_F}$. Using Stirlings formula and taking $x\to \infty$, we get
\begin{equation*}
    \frac{c^{d_F} (\Gamma(c))^{d_F}}{(\alpha_F x)^c} \ll e^{-((\alpha_F -\epsilon) x)^{1/d_F} d_F },
\end{equation*}
for every $\epsilon>0$. Thus
\begin{equation*}
     \left| \log f(x) - \log (bx^a e^{\pi \rho x}) \right| \ll  e^{-((\alpha_F-\epsilon) x)^{1/d_F} d_F },
\end{equation*}
for every $\epsilon>0$. If $f(x)>bx^a e^{\pi \rho x}$, exponentiating both sides gives
\begin{equation*}
    \frac{f(x)}{b x^a e^{mx^{1/d_F}}} = 1 + O\left( e^{-((\alpha_F-\epsilon) x)^{1/d_F} d_F }\right)
\end{equation*}
Using the condition \eqref{main-condition} and the fact $e^y = 1 + O(y)$ for $y\ll 1$, we have for $x\to\infty$
\begin{equation}\label{equation-1}
    f(x) = b x^a e^{mx^{1/d_F}} + O\left( e^{-\delta x^{1/d_F}}\right).
\end{equation}
Moreover, if $f(x) < bx^a e^{\pi \rho x}$, using the condition \eqref{main-condition}, we also get \eqref{equation-1}. Since $f$ is even,
\begin{equation*}
    \left| f(x) - b |x|^a e^{m|x|^{1/d_F}}\right| \ll e^{-\delta |x|^{1/d_F}}
\end{equation*}
for all real $x \neq 0$. From the definition of $f(z)$, note that $h(z) := f(z^{d_F})$ is also an entire function of exponential type and satisfies
\begin{equation*}
     \left| h(x) - b |x|^{a/d_F} e^{m|x|}\right| \ll e^{-\delta |x|}
\end{equation*}
for all real $x\neq 0$. Using Lemma \ref{uniqueness-growth} on $h(z)$, we conclude that there are at most two such functions. Hence, there are at most $2 d_F$ choices for $f(x)$ and therefore for $g(s)$. This proves Theorem \ref{main-theorem}.

\section{\bf Application to Dedekind zeta-functions}
For a number field $K/\mathbb{Q}$, let $n_K$ denote the degree $[K:\mathbb{Q}]$ and $|\Delta_K|$ denote the absolute discriminant. The Dedekind zeta-function associated to $K$ is defined as
\begin{equation*}
\zeta_K(s):= \prod_{\mathfrak{P}\subset \mathcal{O}_K} \left( 1- N\mathfrak{P}^{-s}\right)^{-1},
\end{equation*}
for $\Re(s)>1$, where $\mathfrak{P}$ runs over all non-zero prime ideals in the ring of integers of $K$. The function $\zeta_K(s)$ has an analytic continuation to the whole complex plane except for a simple pole at $s=1$. Let the residue of $\zeta_K(s)$ at $s=1$ be $\rho_K$. Additionally, $\zeta_K(s)$ satisfies a functional equation and hence, the growth condition $(\dagger)$ with $d_{\zeta_K} = n_K$ and $\alpha_{\zeta_K} = (2\pi)^{n_K} / |\Delta_K|$. Applying Theorem \ref{main-theorem}, we have
\begin{corollary}\label{Dedekind-zeta-corollary}
For the same values of $n_K$, $|\Delta_K|$ and $\rho_K$, there are at most $2n_K$ Dedekind zeta-functions $\zeta_K(s)$ satisfying
\begin{equation}\label{dedekind-zeta-condition}
    \left|\Delta_K^{1/n_K}\right| < \frac{2 \sin (\pi/2n_K)}{\rho_K}.
\end{equation}
\end{corollary}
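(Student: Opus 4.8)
The plan is to show that every Dedekind zeta-function $\zeta_K(s)$ belongs to the class $\mathbb{B}$, to check that the hypothesis \eqref{dedekind-zeta-condition} is exactly the hypothesis \eqref{main-condition} of Theorem \ref{main-theorem} for these functions, and then to invoke Theorem \ref{main-theorem} directly. The counting statement will then follow because Dedekind zeta-functions with prescribed $n_K$, $|\Delta_K|$, $\rho_K$ form a subfamily of the elements of $\mathbb{B}$ with the corresponding fixed invariants $d_F$, $\alpha_F$, $\rho_F$.

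First I would verify the four defining properties of $\mathbb{B}$ for $F=\zeta_K$. Expanding the Euler product gives $\zeta_K(s)=\sum_{\mathfrak{a}}(N\mathfrak{a})^{-s}=\sum_{n\geq 1}a_n n^{-s}$ with $a_n=\#\{\mathfrak{a}\subset\mathcal{O}_K:\,N\mathfrak{a}=n\}\geq 0$, $a_1=1$ and $\lambda_n=n$, absolutely convergent on $\Re(s)>1$, so property (1) holds. Property (2) — analytic continuation to $\mathbb{C}$ with a simple pole at $s=1$ of residue $\rho_K$ — is classical. For property (3), writing $n_K=r_1+2r_2$, the completed function $\Lambda_K(s)=|\Delta_K|^{s/2}\bigl(\pi^{-s/2}\Gamma(s/2)\bigr)^{r_1}\bigl(2(2\pi)^{-s}\Gamma(s)\bigr)^{r_2}\zeta_K(s)$ satisfies $\Lambda_K(s)=\Lambda_K(1-s)$, a functional equation of the type \eqref{fneq}; as recorded above this yields the growth condition $(\dagger)$ with $d_{\zeta_K}=n_K$, $q_{\zeta_K}=|\Delta_K|$ and hence $\alpha_{\zeta_K}=(2\pi)^{n_K}/|\Delta_K|$. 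Property (4) also follows from the functional equation: for $n\in\mathbb{N}$ the point $s=-2nn_K$ is a negative even integer, so the gamma factor $\Gamma(s/2)^{r_1}\Gamma(s)^{r_2}$ has a pole there (of order $r_1+r_2$), while $\Lambda_K(1-s)=\Lambda_K(1+2nn_K)$ is finite and nonzero since $1+2nn_K$ lies in the region of absolute convergence; hence $\zeta_K$ must vanish at $s=-2nn_K$, giving $\zeta_K(-2nd_{\zeta_K})=0$ for all $n\in\mathbb{N}$.

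Next I would translate the hypothesis. Substituting $d_F=n_K$, $\rho_F=\rho_K$ and $\alpha_F=(2\pi)^{n_K}/|\Delta_K|$ into \eqref{main-condition} and taking $n_K$-th roots gives
\[
\frac{2\pi}{|\Delta_K|^{1/n_K}} \;>\; \frac{\pi\rho_K}{\sin\frac{\pi}{2n_K}},
\]
which rearranges precisely to $|\Delta_K|^{1/n_K}<2\sin(\pi/2n_K)/\rho_K$, i.e.\ to \eqref{dedekind-zeta-condition}. Thus any Dedekind zeta-function obeying \eqref{dedekind-zeta-condition} satisfies the hypothesis of Theorem \ref{main-theorem}.

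To conclude, fix $n_K$, $|\Delta_K|$ and $\rho_K$ with \eqref{dedekind-zeta-condition} holding. If no number field has these invariants and a Dedekind zeta-function meeting the constraint, the claim is vacuous; otherwise pick one such $\zeta_{K_0}$, note it lies in $\mathbb{B}$ and satisfies \eqref{main-condition} by the previous paragraph, and apply Theorem \ref{main-theorem} with $F=\zeta_{K_0}$. Every Dedekind zeta-function $\zeta_K$ with the same $n_K$, $|\Delta_K|$, $\rho_K$ then lies in $\mathbb{B}$ with $d_{\zeta_K}=d_F$, $\alpha_{\zeta_K}=\alpha_F$, $\rho_{\zeta_K}=\rho_F$, so the theorem bounds the number of such elements of $\mathbb{B}$ — and a fortiori the number of such Dedekind zeta-functions — by $2d_F=2n_K$. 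There is no genuine obstacle here beyond bookkeeping; the only point deserving a little care is property (4), where one must check the trivial zeros occur at the points $s=-2nn_K$ (governed by $d_{\zeta_K}=n_K$) rather than at every negative even integer, which is exactly where the degree enters.
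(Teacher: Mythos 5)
Your proposal is correct and follows essentially the same route as the paper, which simply records that $\zeta_K$ satisfies $(\dagger)$ with $d_{\zeta_K}=n_K$, $\alpha_{\zeta_K}=(2\pi)^{n_K}/|\Delta_K|$ and applies Theorem \ref{main-theorem}; your version just fills in the membership checks and the algebraic translation of \eqref{main-condition} into \eqref{dedekind-zeta-condition}, both of which are done correctly. The only cosmetic point is that since some $a_n=\#\{\mathfrak{a}:N\mathfrak{a}=n\}$ vanish, one should re-index so that $\lambda_n$ runs over the norms actually attained, to meet the strict positivity $a_n>0$ in the definition of $\mathbb{B}$.
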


For a fixed degree $n_K$, as we vary the number fields $K$, the root discriminant $|\Delta_K|^{1/n_K} \to \infty$. Thus, Corollary \ref{Dedekind-zeta-corollary} yields a uniqueness theorem for only finitely many number fields. More interesting application is in the context of asymptotically exact families introduced by Tsfasman-Vl\u{a}du\c{t} \cite{Tsfasman} in 2002.\\

For a number field $K$ and any prime power $q$, let $N_q(K)$ denote the number of non-archimedean places $v$ of $K$ such that $Norm(v)=q$. A sequence $\mathcal{K} = \{K_i\}_{i\in\mathbb{N}}$ of number fields is said to be a family if $K_i\neq K_j$ for $i\neq j$. We say that a family $\mathcal{K}$ is \textit{asymptotically exact} if the limits
\begin{equation*}
\phi_{\mathbb{R}} := \lim_{i\to \infty} \frac{r_1(K_i)}{g_{K_i}}, \hspace{5mm} \phi_{\mathbb{C}} := \lim_{i\to \infty} \frac{r_2(K_i)}{g_{K_i}},\hspace{5mm}
\phi_q := \lim_{i\to\infty} \frac{N_q(K_i)}{g_{K_i}}
\end{equation*}
exist for all prime powers $q$, where $r_1(K_i)$ and $r_2(K_i)$ are the number of real and complex embeddings of $K_i$ respectively. We say that an asymptotically exact family $\mathcal{K} = \{K_i\}$ is \textit{asymptotically bad}, if $\phi_q = \phi_{\mathbb{R}} = \phi_{\mathbb{C}} =0$ for all prime powers $q$. This is analogous to saying that the root discriminant $ |\Delta_{K_i}|^{1/n_{K_i}}$ tends to infinity as $i\to \infty$. If an asymptotically exact family $\mathcal{K}$ is not asymptotically bad, we say that it is \textit{asymptotically good}.\\

Most naturally occurring families of number fields are asymptotically bad families. On the other hand, asymptotically good families are rather mysterious and very little is known about them. In most cases, one must assume the asymptotically good family to be a tower of number fields to prove anything interesting (see for instance \cite[Theorem 7.3]{Tsfasman}). It is important to note that the root discriminant $|\Delta_K|^{1/n_K}$ converges to a non-zero limit over an asymptotically good family. Thus, Corollary \ref{Dedekind-zeta-corollary} becomes interesting in this case and sheds light on how many $\zeta_K$ can have the same degree $n_K$, discriminant $\Delta_K$ and residue $\rho_K$ in an asymptotically good family of number fields.

\section{\bf Acknowledgements}
I am grateful to Prof. M. R. Murty and Prof. V. K. Murty for their suggestions and comments on an earlier version of this paper. This work was supported by a Coleman postdoctoral fellowship at Queen's University, Kingston.


    

\bibliographystyle{abbrv}
\bibliography{extremal}
\end{document}